\documentclass[11pt]{amsart}
\usepackage{graphicx}
\usepackage[active]{srcltx}
 \makeatletter
\renewcommand*\subjclass[2][2000]{%
  \def\@subjclass{#2}%
  \@ifundefined{subjclassname@#1}{%
    \ClassWarning{\@classname}{Unknown edition (#1) of Mathematics
      Subject Classification; using '1991'.}%
  }{%
    \@xp\let\@xp\subjclassname\csname subjclassname@#1\endcsname
  }%
}
 \makeatother
\usepackage{enumerate,url,amssymb,  mathrsfs,yhmath}

\newtheorem{theorem}{Theorem}[section]

\newtheorem*{lemma*}{Lemma}
\newtheorem{proposition}[theorem]{Proposition}
\newtheorem{corollary}[theorem]{Corollary}
\newcommand{\onto}{\xrightarrow[]{{}_{\!\!\textnormal{onto}\!\!}}}

\theoremstyle{definition}

\newtheorem{example}[theorem]{Example}

\theoremstyle{remark}
\newtheorem{remark}[theorem]{Remark}

\numberwithin{equation}{section}


\def\XXint#1#2#3{{\setbox0=\hbox{$#1{#2#3}{\int}$}
\vcenter{\hbox{$#2#3$}}\kern-.5\wd0}}

\def\le{\leqslant}
\def\ge{\geqslant}
\setcounter{tocdepth}{1}
\begin{document}

\title{A sharp inequality for harmonic diffeomorphisms of the unit disk} \subjclass{Primary 31R05;
Secondary 42B30 }


\keywords{Harmonic functions, Bloch functions, Hardy spaces}
\author{David Kalaj}
\address{University of Montenegro, Faculty of Natural Sciences and
Mathematics, Cetinjski put b.b. 81000 Podgorica, Montenegro}
\email{davidk@ac.me}

\begin{abstract}
We extend the classical Schwarz-Pick inequality to the class of harmonic mappings between the unit disk and a Jordan domain with given perimeter. It is intriguing that the extremals in this case are certain harmonic diffeomorphisms between the unit disk and a convex domain that solve the Beltrami equation of second order.
\end{abstract}  \maketitle

\section{Introduction}

Let $\mathbf{U}$ be the unit disk in the complex plane $\mathbf{C}$ and denote by $\mathbf{T}$ its boundary. A harmonic mapping $f$ of the unit disk into the complex plane can be written by $f(z)=g(z)+\overline{h(z)}$ where $g$ and $h$ are holomorphic functions defined on the unit disk.  Two of essential properties of harmonic mappings are given by Lewy theorem, and Rado-Kneser-Choquet theorem. Lewy theorem states that a injective harmonic mapping is indeed a diffeomorphism. Rado-Kneser-Choquet theorem states that a Poisson extension of a homeomorphism of the unit circle $\mathbf{T}$ onto a convex Jordan curve $\gamma$ is a diffeomorphism on the unit disk onto the inner part of $\gamma$. For those and many more important properties of harmonic mappings we refer to the book of Duren \cite{duren}.

The standard Schwarz-Pick lemma for holomorphic mappings states that every holomorphic mapping $f$ of the unit disk onto itself satisfies the inequality \begin{equation}\label{schar}
|f'(z)|\le \frac{1-|f(z)|^2}{1-|z|^2}.
\end{equation}
If the equality is attained in \eqref{schar} for a fixed $z=a\in\mathbf{U}$, then $f$ is a M\"obius transformation of the unit disk.

It follows from \eqref{schar} the weaker inequality  \begin{equation}\label{schar1}
|f'(z)|\le \frac{1}{1-|z|^2}
\end{equation}
with the equality in \eqref{schar1} for some fixed $z=a$ if and only if $f(z)=e^{it}\frac{z-a}{1-z\bar a}$.
We will extend this result to harmonic mappings.

\section{Main result}

\begin{theorem}\label{prev}
If $f$ is a harmonic orientation preserving diffeomorphism of the unit disk $\mathbf{U}$ onto a Jordan domain $\Omega$ with rectifiable boundary of length $2\pi R$, then the sharp inequality
\begin{equation}\label{partibar}|\partial f(z)|\le \frac{R}{1-|z|^2},\ \ \ z\in\mathbf{U}\end{equation} holds. If the equality in \eqref{partibar} is attained for some $a$, then $\Omega$ is convex and there is a holomorphic function $\mu:\mathbf{U}\to\mathbf{U}$ and a constant $\theta\in[0,2\pi]$ such that

\begin{equation}\label{convex}F(z):=e^{-i\theta}f\left(\frac{z+a}{1+z\bar a}\right)=R\left(\int_0^z\frac{ dt}{1+t^2\mu(t)}+\overline{\int_0^z\frac{\mu(t)dt}{1+t^2\mu(t)}}\right).\end{equation}

Moreover every function $f$ defined by  \eqref{convex}, is a harmonic diffeomorphism and maps the unit disk to a  Jordan domain bounded by a convex curve of length $2\pi R$ and the inequality \eqref{partibar} is attained for $z=a$.

\end{theorem}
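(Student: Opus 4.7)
My plan has four parts: (i) reduce \eqref{partibar} to the case $z=0$ by a Möbius change of variable; (ii) bound $|\partial f(0)|$ by Stieltjes integration by parts in the Fourier representation of the boundary trace; (iii) extract the rigidity case from the triangle inequality to obtain \eqref{convex}; (iv) verify the converse by a direct computation together with Rado--Kneser--Choquet.

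For (i)--(ii), fix $a \in \mathbf U$ and let $\phi(w) = (w+a)/(1+\bar a w)$, so $\phi$ is a Möbius automorphism of $\mathbf U$ with $\phi(0) = a$ and $\phi'(0) = 1 - |a|^2$. Since $\phi$ is holomorphic, $F := f \circ \phi$ is again a harmonic orientation-preserving diffeomorphism onto $\Omega$ and the chain rule gives $\partial F(0) = (1 - |a|^2)\partial f(a)$, so \eqref{partibar} reduces to the statement $|\partial f(0)| \le R$ when $\partial\Omega$ has length $2\pi R$. Writing $f = g + \bar h$ and $f^{*}(\varphi) := f(e^{i\varphi})$, one has $\partial f(0) = g'(0) = \frac{1}{2\pi}\int_0^{2\pi} f^{*}(\varphi)\,e^{-i\varphi}\, d\varphi$. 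Since $\partial\Omega$ is rectifiable, $f^{*}$ is of bounded variation with total variation $2\pi R$, so Stieltjes integration by parts yields
\[
g'(0) = -\frac{i}{2\pi}\int_0^{2\pi} e^{-i\varphi}\, df^{*}(\varphi),
\]
from which $|\partial f(0)| \le R$ is immediate.

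For (iii), choose $\theta$ so that $e^{-i\theta}(1-|a|^2)\partial f(a) = R$; the reduced mapping $F := e^{-i\theta} f \circ \phi$ is a harmonic diffeomorphism onto $e^{-i\theta}\Omega$ with $\partial F(0) = R$. Equality in the triangle inequality above then forces $(F^{*})'(\varphi) = ie^{i\varphi}\rho(\varphi)$ almost everywhere with $\rho \ge 0$, so the tangent to the boundary rotates monotonically through $2\pi$ and $\Omega$ is convex. Writing $F = G + \bar H$ and matching $(F^{*})'(\varphi) = ie^{i\varphi}G'(e^{i\varphi}) - ie^{-i\varphi}\overline{H'(e^{i\varphi})}$ with $ie^{i\varphi}\rho(\varphi)$ gives $\im[G'(z) + z^2 H'(z)] = 0$ on $\mathbf T$; the holomorphic function $G' + z^2 H'$ must then be a real constant, equal to $R$ by evaluation at $0$. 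Since $G'$ is nowhere zero by Lewy's theorem, $\mu := H'/G'$ is a holomorphic self-map of $\mathbf U$, and $G' = R/(1+z^2\mu)$, $H' = R\mu/(1+z^2\mu)$; integrating from $0$ recovers \eqref{convex}.

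For (iv), starting from any holomorphic $\mu: \mathbf U \to \mathbf U$ and $F$ defined by \eqref{convex}, the Jacobian $|\partial F|^2 - |\bar\partial F|^2 = R^2(1-|\mu|^2)/|1+z^2\mu|^2$ is positive, so Lewy yields a local diffeomorphism. A boundary computation parallel to step (iii) gives $(F^{*})'(\varphi) = ie^{i\varphi}\rho(\varphi)$ with $\rho \ge 0$, and applying the mean value property to the holomorphic functions $G'$ and $z^2 H'$ evaluates $\int_0^{2\pi}\rho\, d\varphi = 2\pi G'(0) = 2\pi R$. Monotonicity of the tangent angle $\pi/2 + \varphi$ identifies $\partial F(\mathbf U)$ as a convex Jordan curve of length $2\pi R$, and the Rado--Kneser--Choquet theorem cited in the introduction then promotes the local diffeomorphism to a global one onto the convex interior; sharpness of \eqref{partibar} at $z=0$ is immediate from $\partial F(0) = R$. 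The main technical obstacle I anticipate is justifying the boundary integration by parts and the mean-value identity in sufficient generality, in particular when $\mu$ satisfies only the open condition $|\mu| < 1$ on $\mathbf U$; this should be handled by carrying out the arguments on the shrunken circles $|z| = r < 1$ and passing to the limit.
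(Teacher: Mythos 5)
Your proposal is correct in substance but reaches the two central conclusions by a genuinely different route than the paper. For the inequality, the paper writes $|\partial_t f(e^{it})|=|g'(z)-\overline{h'(z)z^2}|$, observes that this modulus of a harmonic function is subharmonic, and applies the sub--mean-value property at the origin (first for $f$ smooth up to $\mathbf T$, then by exhausting $\Omega$ with smooth subdomains); you instead identify $g'(0)$ as the first Fourier coefficient of the boundary trace and bound it by Stieltjes integration by parts against the total variation $2\pi R$, which handles the merely rectifiable case in one stroke provided you invoke the cited result of \cite{kmm} that the boundary function exists, is of bounded variation, and represents $f$ as a Poisson integral. For the rigidity, the paper converts equality into the vanishing of the Riesz measure of the subharmonic function $u=|g'-\overline{h'z^2}|$ via the Riesz representation (Proposition~4.5.1 of \cite{pavlo}), deduces $\Delta u=u|\nabla\theta|^2=0$ in the interior, hence constant argument everywhere, and then that $G-H$ is a real holomorphic function; you extract constancy of the argument only on $\mathbf T$ from the equality case of the triangle inequality for measures, and then need the maximum principle to propagate $\im\left[G'+z^2H'\right]=0$ from the boundary into the disk. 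Both end at $G'+z^2H'\equiv R$, but your step requires knowing that this harmonic function is determined by its a.e.\ radial boundary values (a harmonic function can vanish a.e.\ on $\mathbf T$ without vanishing identically), so you must justify that $\partial_t F\in h^1$ and that the relevant analytic pieces are in the Smirnov class --- exactly the point you flag and propose to handle on circles $|z|=r$; the paper's interior Riesz-measure argument sidesteps this. Your converse direction (Jacobian positivity, tangent argument $\pi/2+\varphi$, injectivity on $\mathbf T$, Rado--Kneser--Choquet, and the length computation by the mean value property) coincides with the paper's, up to the minor point that positivity of the Jacobian gives a local diffeomorphism by the inverse function theorem rather than by Lewy's theorem, which is the converse implication. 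One further small remark applying to both your write-up and the paper's statement: \eqref{convex} as written forces $F(0)=0$, i.e.\ $f(a)=0$, so an additive constant is implicitly being normalized away.
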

\begin{corollary}
Under the conditions of Theorem~\ref{prev}, if $|\mu |_{\infty}=k<1$, then the mapping $F$ is $K=\frac{1+k}{1-k}$ bi-Lipschitz, and $K-$quasiconformal.
\end{corollary}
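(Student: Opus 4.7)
The plan is to read the complex derivatives of $F$ directly off the integral representation \eqref{convex}, express the Beltrami dilatation in terms of $\mu$, and then turn the resulting pointwise bounds on $|\partial F|\pm|\bar\partial F|$ into global bi-Lipschitz and quasiconformal estimates, using the convexity of the image that Theorem~\ref{prev} provides.

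\textbf{Step 1: Compute the derivatives.} Write $F=G+\overline{H}$ where
$$G'(z)=\frac{R}{1+z^{2}\mu(z)},\qquad H'(z)=\frac{R\,\mu(z)}{1+z^{2}\mu(z)}.$$
Since $G,H$ are holomorphic, $\partial F=G'$ and $\bar\partial F=\overline{H'}$, so $\bar\partial F(z)=\overline{\mu(z)}\,\overline{\partial F(z)}$ pointwise (a second Beltrami equation). In particular $|\bar\partial F(z)|=|\mu(z)|\,|\partial F(z)|\le k\,|\partial F(z)|$, so the complex dilatation satisfies $|\mu_{F}|\le k<1$ throughout $\mathbf{U}$. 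This is precisely the $K$-quasiconformality with $K=(1+k)/(1-k)$.

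\textbf{Step 2: Pointwise derivative bounds.} Because $|z^{2}\mu(z)|\le k$ on $\mathbf{U}$, we have $1-k\le|1+z^{2}\mu(z)|\le 1+k$, hence
$$\frac{R}{1+k}\le|\partial F(z)|\le\frac{R}{1-k}.$$
Combined with $|\bar\partial F|=|\mu|\,|\partial F|$ this gives the two key estimates
$$|\partial F(z)|+|\bar\partial F(z)|\le\frac{R(1+k)}{1-k},\qquad |\partial F(z)|-|\bar\partial F(z)|\ge\frac{R(1-k)}{1+k}.$$

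\textbf{Step 3: From pointwise to global.} The upper Lipschitz bound is immediate: integrating $DF$ along the chord from $z_{1}$ to $z_{2}$ (which lies in the convex set $\mathbf{U}$) yields
$$|F(z_{1})-F(z_{2})|\le\frac{R(1+k)}{1-k}\,|z_{1}-z_{2}|.$$
For the matching lower bound, I invoke Theorem~\ref{prev}, which guarantees that $\Omega=F(\mathbf{U})$ is bounded by a convex curve, hence is convex. Then $F^{-1}\colon\Omega\to\mathbf{U}$ is a $C^{1}$ diffeomorphism whose operator-norm derivative equals the reciprocal of the smallest singular value of $DF$:
$$\|DF^{-1}\|_{\mathrm{op}}=\frac{1}{|\partial F|-|\bar\partial F|}\le\frac{1+k}{R(1-k)}.$$
Integrating $DF^{-1}$ along the straight segment joining $F(z_{1})$ and $F(z_{2})$ inside $\Omega$ produces $|z_{1}-z_{2}|\le\frac{1+k}{R(1-k)}|F(z_{1})-F(z_{2})|$, which is the desired lower bi-Lipschitz bound. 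After absorbing the factor $R$, the ratio of the two Lipschitz constants equals $K$.

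\textbf{Main obstacle.} The only nonroutine step is the lower Lipschitz estimate, because a pointwise bound on the smallest singular value of $DF$ does not in general translate into a lower bi-Lipschitz bound. The device that saves the argument is to dualize through $F^{-1}$, but this requires the line segment in the \emph{target} between $F(z_1)$ and $F(z_2)$ to remain in $\Omega$; that is exactly where the convexity of $\Omega$, furnished gratis by the extremal description in Theorem~\ref{prev}, is indispensable. Everything else is a direct calculation from the explicit formula.
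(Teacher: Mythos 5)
Your proposal is correct and follows essentially the same route as the paper: read $\partial F$ and $\bar\partial F$ off the representation \eqref{convex}, bound $|1+z^2\mu(z)|$ between $1-k$ and $1+k$, and deduce $\frac{R(1-k)}{1+k}\le |\partial F|-|\bar\partial F|\le |\partial F|+|\bar\partial F|\le \frac{R(1+k)}{1-k}$ together with $|\bar\partial F|\le k|\partial F|$ for quasiconformality. The paper stops at these pointwise bounds; your Step 3, integrating $DF$ along chords of $\mathbf{U}$ and $DF^{-1}$ along chords of the convex image $\Omega$, is exactly the standard argument the paper leaves implicit, and it is carried out correctly.
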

\begin{proof}
We have that $$f_z(z)=\frac{1}{1+z^2\mu(z)}$$ and $$\overline{f_{\bar z}(z)}= \frac{\mu(z)}{1+z^2\mu(z)}.$$ Thus
$$\frac{1-k}{1+k}\le |F_z|-|F_{\bar z}|=|lF|\le |dF|=|F_z|+|F_{\bar z}|\le \frac{1+k}{1-k}.$$
\end{proof}

\begin{corollary}\label{marte}  If $\Omega=\mathbf{U}$, then the equality is attained in \eqref{partibar} for some $a$ if only if $f$ is a M\"obius transformation of the unit disk onto itself.
\end{corollary}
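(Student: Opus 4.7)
The unit disk has perimeter $2\pi$, so Theorem~\ref{prev} applies with $R=1$. If equality in \eqref{partibar} is attained at some $a\in\mathbf U$, the theorem furnishes a holomorphic $\mu:\mathbf U\to\mathbf U$ and $\theta\in[0,2\pi]$ such that $F(z):=e^{-i\theta}f\!\left(\frac{z+a}{1+z\bar a}\right)$ has the representation \eqref{convex} with $R=1$. Since $f$ is a harmonic diffeomorphism of $\mathbf U$ onto $\mathbf U$ and both composition with the Möbius automorphism $z\mapsto(z+a)/(1+z\bar a)$ and multiplication by $e^{-i\theta}$ preserve this property, $F$ itself is a harmonic diffeomorphism of $\mathbf U$ onto $\mathbf U$ with $F(0)=0$. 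To conclude it suffices to prove $\mu\equiv 0$: then $F(z)=z$, so $f(w)=e^{i\theta}(w-a)/(1-w\bar a)$ is a Möbius self-map of $\mathbf U$. The converse is immediate from the direct computation $|\partial f(a)|=1/(1-|a|^2)$ for such an $f$.

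\textbf{Main step: a boundary identity.} Writing $F=g+\bar h$ with $g'(z)=1/(1+z^2\mu(z))$ and $h'(z)=\mu(z)/(1+z^2\mu(z))$, the tangential derivative on $\mathbf T$ at $z=e^{i\varphi}$ is $F_\varphi=i\bigl(zg'(z)-\bar z\,\overline{h'(z)}\bigr)$. Placing the two terms over the common denominator $(1+e^{2i\varphi}\mu)(e^{2i\varphi}+\bar\mu)$ collapses the numerator to $e^{2i\varphi}(1-|\mu|^2)$, and the factorization $e^{2i\varphi}+\bar\mu=e^{2i\varphi}\,\overline{1+e^{2i\varphi}\mu}$ turns the denominator into $e^{2i\varphi}|1+e^{2i\varphi}\mu|^2$, yielding the crucial identity
\begin{equation*}
F_\varphi(e^{i\varphi})=ie^{i\varphi}\,\frac{1-|\mu(e^{i\varphi})|^2}{|1+e^{2i\varphi}\mu(e^{i\varphi})|^2}.
\end{equation*}
The key consequence is that $F_\varphi$ at the image point $F(e^{i\varphi})$ points along $ie^{i\varphi}$, namely along the tangent to $\mathbf T$ at $e^{i\varphi}$ itself and not at the image point.

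\textbf{Closing the argument and main obstacle.} Since $F$ is an orientation-preserving diffeomorphism of $\mathbf U$ onto $\mathbf U$ extending homeomorphically to the closure, its boundary values have the form $F(e^{i\varphi})=e^{i\psi(\varphi)}$ with $\psi$ continuous, strictly increasing and of degree one, so $F_\varphi=i\psi'(\varphi)e^{i\psi(\varphi)}$ with $\psi'>0$. Comparing with the displayed identity and cancelling the positive moduli forces $e^{i\psi(\varphi)}=e^{i\varphi}$, hence $\psi(\varphi)\equiv\varphi$ by continuity. Thus $F$ equals the identity on $\mathbf T$, and uniqueness of the Poisson extension gives $F(z)=z$ throughout $\mathbf U$, whence $\mu\equiv 0$. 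The only nontrivial ingredient is the boundary identity above: the algebraic simplification revealing that the phase of $F_\varphi$ is $e^{i\varphi}$ rather than $e^{i\psi(\varphi)}$ is what compels the boundary map to be the identity; everything else is a formality.
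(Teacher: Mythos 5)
Your route is genuinely different from the paper's. The paper proves this corollary by an area computation: it rewrites the extremal representation \eqref{convex} in the form \eqref{convex1} with $h(z)=\sum_k a_kz^k$ and computes $|F(\mathbf U)|=R^2\pi\bigl(1-\sum_k c_k|a_k|^2\bigr)$ with positive weights $c_k$; since for $R=1$ the image must have area $\pi$ when $\Omega=\mathbf U$, all $a_k$ vanish, $h\equiv 0$, and $F(z)=z$. That is an isoperimetric-type argument which never touches boundary values. Your argument instead rests on the boundary identity $F_\varphi=ie^{i\varphi}\,\frac{1-|\mu|^2}{|1+e^{2i\varphi}\mu|^2}$; the algebra there is correct, and in fact the same identity (in the form $\partial_t F/|\partial_t F|=iz$) already appears in the paper's proof of Theorem~\ref{prev}, where it is used to show that $F(\mathbf T)$ is convex and that $F|_{\mathbf T}$ is injective. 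Observing that, when the target is the disk itself, this identity pins the boundary map to the identity is an attractive alternative ending.

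There is, however, a gap in your closing step. For a harmonic diffeomorphism of $\mathbf U$ onto $\mathbf U$ the boundary correspondence $e^{i\psi(\varphi)}$ is a priori only a weakly monotone degree-one map: it need not be strictly increasing, let alone $C^1$ with $\psi'>0$. Indeed, the Poisson extension of $e^{i\psi}$ for a singular (Cantor-type) nondecreasing $\psi$ with dense range is, by Rad\'o--Kneser--Choquet, a harmonic diffeomorphism onto $\mathbf U$ with $\psi'=0$ almost everywhere, and for such a map your comparison of phases yields nothing; similarly, $\mu$ is merely a bounded holomorphic function, so your identity holds only almost everywhere in the sense of radial limits. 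To close the argument you must first invoke the rectifiability input ($\partial_t F\in h^1$, hence $\psi$ absolutely continuous with $\int_0^{2\pi}\psi'\,d\varphi=2\pi$, via the Smirnov-type theorem of \cite{kmm} that the paper cites), conclude that $\{\psi'>0\}$ has positive measure, deduce $\psi(\varphi)=\varphi+2\pi k$ there, and then a short Lebesgue-density argument gives $\psi'=1$ a.e.\ and $\psi(\varphi)\equiv\varphi$. So the idea works, but the step you dismiss as ``a formality'' is precisely where the remaining work lies, and the paper's area argument sidesteps it entirely.
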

\begin{proof}[Proof of Corollary~\ref{marte}]
Under conditions of Theorem~\ref{prev} the function \eqref{convex} can be written as \begin{equation}\label{convex1}F(z):=e^{-i\theta}f\left(\frac{z+a}{1+z\bar a}\right)=R\left(\int_0^z\left(1-t^2 h'(t)\right) dt+\overline{h(z)}\right)\end{equation} where
$h(z)=\sum_{k=0}^\infty a_k z^k$ is defined on the unit disk and satisfies the condition
\begin{equation}\label{eqp} \frac{|h'(z)|}{|1-z^2 h'(z)|}< 1, \ \ z\in\mathbf{U}.\end{equation}
 Moreover $$|f(\mathbf{U})|=R^2\pi\left(1- \sum_{k=0}^\infty \frac{|a_k|^2}{(n+1)(n+2)}\right). $$
If $R=1$,  this implies that $\Omega=\mathbf{U}$ if and only if $h\equiv 0$.
 \end{proof}
 By using the corresponding result in \cite{cs} and Theorem~\ref{prev} we have
\begin{corollary}
If as in \eqref{convex1}, $F(z)=g+\overline{h}$, then $F(z)=g(z)-h(z)$ is univalent and convex in direction of real axis.
\end{corollary}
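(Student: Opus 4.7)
The plan is to invoke the classical shear theorem of Clunie and Sheil--Small (the result quoted from \cite{cs}), applied to the representation \eqref{convex1}. That theorem states that a sense-preserving harmonic mapping $F=g+\overline{h}$ of the unit disk is univalent with image convex in the direction of the real axis if and only if the analytic function $g-h$ is univalent with image convex in the direction of the real axis.

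First I would verify the hypotheses. By Theorem~\ref{prev}, the mapping $F$ defined in \eqref{convex1} is a harmonic diffeomorphism of $\mathbf{U}$ onto a convex Jordan domain $\Omega$; in particular it is locally univalent and sense-preserving, which is precisely the content of \eqref{eqp} rewritten in terms of $h$. Since $\Omega$ is convex, it is convex in every direction, and in particular convex in the direction of the real axis. The direction of the Clunie--Sheil--Small theorem that we need---\emph{univalent harmonic with convex-in-real-direction image implies $g-h$ univalent with convex-in-real-direction image}---then yields the conclusion immediately for the analytic function $g-h$.

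I do not expect any serious obstacle, since the only nontrivial input is the shear theorem from \cite{cs}, and its hypotheses are delivered directly by Theorem~\ref{prev} together with the explicit form \eqref{convex1}. One small point worth flagging is an apparent typographical slip in the statement: as printed it reads ``$F(z)=g(z)-h(z)$,'' but the object that is actually claimed to be univalent and convex in the direction of the real axis is the holomorphic function $g-h$, not $F$ itself.
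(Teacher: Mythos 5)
Your proposal is correct and is exactly the argument the paper intends: the corollary is stated immediately after the phrase ``By using the corresponding result in \cite{cs} and Theorem~\ref{prev}'', i.e.\ the Clunie--Sheil--Small shear theorem applied to the sense-preserving univalent harmonic map $F=g+\overline{h}$ of \eqref{convex1}, whose convex image is in particular convex in the direction of the real axis. Your remark that the printed ``$F(z)=g(z)-h(z)$'' is a typographical slip (the univalent, convex-in-the-real-direction object is the analytic function $g-h$, which should carry a different name) is also well taken.
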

By using Theorem~\ref{prev} we obtain
\begin{corollary}
For every positive constant $R$  and every holomorphic function $\mu$ of the unit disk into itself, there is a unique convex Jordan domain $\Omega=\Omega_{\mu,R}$, with the perimeter $2\pi R$ , such that the initial boundary problem
\begin{equation}\label{boundary}\left\{
  \begin{array}{ll}
    \overline{f_{\bar z}(z)}=\mu(z) f_z(z), & \hbox{} \\
    f_z(0)=R, & \hbox{}
 \\
    f(0)=0. & \hbox{}
  \end{array}
\right.
\end{equation}
admits a unique univalent harmonic solution $f=f_{\mu,R}:\mathbf{U}\onto \Omega$.
\end{corollary}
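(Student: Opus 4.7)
The plan is to deduce both assertions from Theorem~\ref{prev}: existence by exhibiting the extremal of \eqref{partibar} at $z=0$, and uniqueness from the equality case of that inequality.

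For existence I would put
\[
F(z):=R\left(\int_0^z\frac{dt}{1+t^{2}\mu(t)} + \overline{\int_0^z\frac{\mu(t)\,dt}{1+t^{2}\mu(t)}}\right),
\]
first noting that $\mu:\mathbf{U}\to\mathbf{U}$ implies $|t^{2}\mu(t)|<1$ on $\mathbf{U}$, so $1+t^{2}\mu(t)$ is a non-vanishing holomorphic function and both integrands are holomorphic on the disk. Differentiating yields $F_{z}=R/(1+z^{2}\mu(z))$ and $\overline{F_{\bar z}}=R\mu(z)/(1+z^{2}\mu(z))$, from which $\overline{F_{\bar z}}=\mu F_{z}$, $F_{z}(0)=R$ and $F(0)=0$ are immediate. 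The ``moreover'' clause of Theorem~\ref{prev} (specialised to $a=0$, $\theta=0$) then tells us that $F$ is a harmonic diffeomorphism of $\mathbf{U}$ onto a convex Jordan domain $\Omega_{\mu,R}:=F(\mathbf{U})$ of perimeter $2\pi R$, producing the desired pair $(\Omega_{\mu,R},F)$.

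For uniqueness, I would let $f:\mathbf{U}\to\Omega$ be any univalent harmonic solution of \eqref{boundary} with $\Omega$ a convex Jordan domain of perimeter $2\pi R$. The Beltrami equation with $|\mu|<1$ on $\mathbf{U}$ shows $f$ is orientation preserving, so \eqref{partibar} applies. Evaluated at $z=0$ it reads $|f_{z}(0)|\le R$, which together with the normalization $f_{z}(0)=R$ forces equality at $a=0$. The equality clause of Theorem~\ref{prev} then furnishes a holomorphic $\tilde\mu:\mathbf{U}\to\mathbf{U}$ and $\theta\in[0,2\pi]$ with
\[
f(z)=e^{i\theta}\,R\left(\int_0^z\frac{dt}{1+t^{2}\tilde\mu(t)} + \overline{\int_0^z\frac{\tilde\mu(t)\,dt}{1+t^{2}\tilde\mu(t)}}\right).
\]
Substituting this representation into $\overline{f_{\bar z}}=\mu f_{z}$ will give $\tilde\mu=e^{2i\theta}\mu$, and $f_{z}(0)=e^{i\theta}R=R$ will then force $e^{i\theta}=1$. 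Consequently $\theta=0$, $\tilde\mu=\mu$, and $f$ coincides with the $F$ of the existence step; in particular $\Omega=f(\mathbf{U})=\Omega_{\mu,R}$ is uniquely determined.

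The argument is essentially bookkeeping once Theorem~\ref{prev} is granted, so I foresee no real obstacle. The only slightly delicate point is the phase reconciliation in the uniqueness step: one must observe that the rotation parameter $\theta$ twists the Beltrami coefficient by $e^{2i\theta}$ and simultaneously multiplies $f_{z}(0)$ by $e^{i\theta}$, so that the two normalizations together eliminate both free parameters $\theta$ and $\tilde\mu$.
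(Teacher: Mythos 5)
Your proposal is correct and follows the same route the paper intends (the paper offers no written proof beyond ``By using Theorem~\ref{prev} we obtain''): existence comes from the ``moreover'' clause applied to the explicit formula \eqref{convex} with $a=0$, $\theta=0$, and uniqueness from the equality case at $a=0$ forced by the normalization $f_z(0)=R$. Your phase bookkeeping ($\tilde\mu=e^{2i\theta}\mu$ and $e^{i\theta}=1$) is the right way to pin down both free parameters, so nothing is missing.
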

\begin{remark}
If instead of boundary problem \eqref{boundary} we observe  \begin{equation}\label{boundary1}\left\{
  \begin{array}{ll}
    \overline{g_{\bar z}(z)}=\mu(z) g_z(z), & \hbox{} \\
    g_z(a)=\frac{R}{1-|a|^2}, & \hbox{}
 \\
    g(a)=0, & \hbox{}
  \end{array}
\right.
\end{equation}
then the solution $g$ is given by $$g(z)=e^{i\theta}f(\frac{z-a}{1-z\bar a})$$ and thus $g(\mathbf{U})=e^{i\theta} \cdot \Omega_{\mu, R}$. Here $f$ is a solution of \eqref{boundary}.
\end{remark}

\section{Proof of the main result}
\begin{proof}[Proof of Theorem~\ref{prev}]
Assume first that $f(z)=g(z)+\overline{h(z)}$ has $C^1$ extension to the boundary and assume without loos of generality that $R=1$. Then we have $$\partial_t \left({g(z)+\overline{h(z)}}\right)=ig'(z) z+\overline{i h'(z) z}$$
So for $z=e^{it}$, $$|ig'(z) z+\overline{i h'(z) z}|=|g'(z)-\overline{h'(z) z^2}|.$$
Thus
$$2\pi=\int_{\mathbf{T}} \left|\partial_t \left({g(z)+\overline{h(z)}}\right)\right| |dz|=\int_{\mathbf T}|g'(z)-\overline{h'(z) z^2}||dz|.$$

As $ |g'(z)-\overline{h'(z) z^2}|$ is subharmonic, it follows that $$|g'(0)|\le \frac{1}{2\pi}\int_{\mathbf T}|g'(z)-\overline{h'(z) z^2}||dz|.$$

Thus we have that $|g'(0)|\le 1$.  Now if $m(z)=\frac{z+a}{1+z a}$, then $m(0)=a$, and thus $F(z)=f(m(z))$ is a harmonic diffeomorphism of the unit disk onto itself. Further, $$\partial F(0)=f'(a)m'(0)=\partial f(a)(1-|a|^2).$$ Therefore by applying the previous case to $F$ we obtain $$|\partial f(a)|\le \frac{1}{1-|a|^2}.$$

Assume now that the equality is attained for $z=0$. Then
$$|g'(0)|= \frac{1}{2\pi r}\int_{\mathbf {rT}}|g'(z)-\overline{h'(z) z^2}||dz|,$$ or what is the same

$$|g'(0)|= \frac{1}{2\pi }\int_{\mathbf {T}}|g'(zr)-\overline{h'(zr) r^2z^2}||dz|.$$

Thus for $0\le r\le 1$ we have \begin{equation}\label{equ}\frac{1}{2\pi}\int_{\mathbf {T}}|g'(zr)-\overline{h'(rz) r^2z^2}||dz|-|g'(0)|\equiv 0.\end{equation}

In order to continue recall the definition of the Riesz measure $\mu$ of a subharmonic function $u$. Namely there exists a unique positive Borel measure $\mu$ so that $$\int_{\mathbf U} \varphi (z) d\mu(z)=\int_{\mathbf{U}} u \Delta \varphi(z) dm(z),\ \ \ \varphi\in C_0^2(\mathbf{U}).$$

Here $dm$ is the Lebesgue measure defined on the complex plane $\mathbf{C}$.
If $u\in C^2$, then $$d\mu=\Delta u dm.$$

\begin{proposition}\cite[Theorem~4.5.1]{pavlo}\label{pavlo}
If $u$ is a subharmonic function defined on the unit disk then for $r<1$ we have

$$\frac{1}{2\pi}\int_{\mathbf T} u(r z)|dz|-u(0)=\frac{1}{2\pi}\int_{|z|<r} \log\frac{r}{|z|} d\mu(z)$$ where $\mu$ is the Riesz measure of $u$.
\end{proposition}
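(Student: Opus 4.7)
\medskip
\noindent\emph{Proof sketch.} The plan is to establish the identity first for $C^2$ functions by Green's second identity, and then to pass to a general subharmonic $u$ via mollification.

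For the smooth case, set $v(z)=\log(r/|z|)$ on $\mathbf U_r:=\{|z|<r\}$. Then $v$ is harmonic off the origin, vanishes on $\{|z|=r\}$, and satisfies $\Delta v=-2\pi\delta_0$ in the distributional sense. Apply Green's second identity to $u$ and $v$ on the annulus $\{\ep<|z|<r\}$ and let $\ep\to 0$. The term $v\,\partial_\nu u$ drops out on the outer circle because $v=0$ there, and it vanishes in the limit on the inner circle because $|v|=O(\log(1/\ep))$ while the perimeter is $O(\ep)$. The outer normal derivative $\partial_\nu v=-1/r$ on $\{|z|=r\}$ produces the boundary contribution $\frac{1}{r}\int_{|z|=r}u\,ds=\int_{\mathbf T}u(rz)|dz|$, while on the inner circle the contribution of $-u\,\partial_\nu v$ tends to $-2\pi u(0)$ by continuity of $u$ at the origin. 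Collecting terms gives
\[\int_{\mathbf U_r}\log(r/|z|)\,\Delta u\,dm+2\pi u(0)=\int_{\mathbf T} u(rz)|dz|,\]
which, after dividing by $2\pi$ and using $d\mu=\Delta u\,dm$, is the stated formula.

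For a general subharmonic $u$ with $u(0)>-\infty$, mollify: $u_\delta:=u*\varphi_\delta$ is smooth and subharmonic on $\mathbf U_{1-\delta}$, with Riesz measure $\mu_\delta=\Delta u_\delta\,dm$. Apply the smooth case to $u_\delta$ on $\mathbf U_{r'}$ for $r'<r$ (with $\delta$ small). As $\delta\downarrow 0$ one has $u_\delta\downarrow u$ (standard for subharmonic mollifications) and $\mu_\delta\to\mu$ vaguely on compact subsets of $\mathbf U$. Monotone convergence on the left gives $u_\delta(0)\to u(0)$ and $\int_{\mathbf T}u_\delta(r'z)|dz|\to\int_{\mathbf T}u(r'z)|dz|$. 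On the right, since $\log(r'/|z|)\chi_{\{|z|<r'\}}$ is non-negative and lower semicontinuous, truncating at level $N$ produces a bounded continuous function compactly supported in $\mathbf U_r$; vague convergence of $\mu_\delta$ together with monotone convergence as $N\to\infty$ then yields $\int\log(r'/|z|)\,d\mu_\delta\to\int\log(r'/|z|)\,d\mu$. Finally, let $r'\uparrow r$ and apply monotone convergence once more on both sides.

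The main obstacle is exactly this last limit passage on the right-hand side: the test function $\log(r'/|z|)$ is singular at the origin whereas the measures $\mu_\delta$ converge only vaguely. This is the customary subtlety in the Riesz representation of subharmonic functions, and the truncation-plus-monotone-convergence device sketched above is the standard way to handle it.
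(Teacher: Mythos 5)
The paper does not actually prove this proposition: it is imported verbatim from Pavlovi\'c's book \cite[Theorem~4.5.1]{pavlo}, so there is no in-paper argument to compare yours against, and I can only assess it as a free-standing proof. Your strategy --- Green's second identity against the Green function $\log(r/|z|)$ on a punctured disk for $C^2$ functions, then mollification --- is the standard direct route, and the smooth case is correct as written: the signs and boundary terms check out, and your displayed identity is the statement after division by $2\pi$.

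The one step that does not close as written is the passage $\int\log(r'/|z|)\,d\mu_\delta\to\int\log(r'/|z|)\,d\mu$. Truncation at level $N$ combined with vague convergence and monotone convergence in $N$ yields only the one-sided bound $\int\log(r'/|z|)\,d\mu\le\liminf_\delta\int\log(r'/|z|)\,d\mu_\delta$; it does not exclude that part of the $\mu_\delta$-integral against the unbounded portion of the kernel concentrates at the origin and is lost in the limit. The fix is to notice that the truncated tail is itself an instance of your smooth-case identity at a smaller radius: with $s=r'e^{-N}$ one has $\int_{|z|<r'}\bigl(\log(r'/|z|)-N\bigr)^{+}d\mu_\delta=\int_{|z|<s}\log(s/|z|)\,d\mu_\delta=\int_{\mathbf{T}}u_\delta(sz)\,|dz|-2\pi u_\delta(0)$, and this tends to $0$ upon letting $\delta\to0$ and then $N\to\infty$, because the circular means of a subharmonic function decrease to $u(0)$ as the radius shrinks to $0$. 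With that supplement your argument is complete. Two smaller remarks: you silently assume $u(0)>-\infty$ (harmless for the paper's application, where $u=|g'(z)-\overline{h'(z)z^2}|\ge0$, but the proposition as stated allows $u(0)=-\infty$, in which case both sides equal $+\infty$); and the detour through $r'<r$ is unnecessary, since once $\delta<1-r$ the smooth case applies directly at radius $r$. For comparison, the usual textbook route (presumably the one in the cited source) is the Riesz decomposition $u=h+\frac{1}{2\pi}\int\log|z-w|\,d\mu(w)$ on a slightly larger disk, the mean value property of $h$, and Fubini together with $\frac{1}{2\pi}\int_{\mathbf{T}}\log|r\zeta-w|\,|d\zeta|=\log\max(r,|w|)$; that version confines all the delicate limit passages to the decomposition theorem itself.
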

By applying Proposition~\ref{pavlo} to the subharmonic function $$u(z)=|g'(z)-\overline{h'(z) z^2}|$$ in view of \eqref{equ} we obtain that $$\frac{1}{2\pi}\int_{|z|<r} \log\frac{r}{|z|} d\mu(z)\equiv 0.$$ Thus in particular we infer that $\mu=0$, or what is the same $\Delta u=0$. As $u=|w|$ where $w=|u|e^{i\theta}$ is harmonic, it follows that
$$\Delta u = u|\nabla \theta|^2=0.$$ Therefore $\nabla \theta\equiv 0$, and therefore $\theta=\mathrm{const}$.

So $$e^{-i\theta}(g'(z)-\overline{h'(z) z^2})=G(z)+\overline{H(z)},$$ is a real harmonic function. Here  $$G(z)=e^{-i\theta}g'(z)$$ and $$H(z)=-e^{i\theta}{h'(z) z^2}$$ are analytic functions satisfying the condition $|H(z)|<|G(z)|$ in view of Lewy theorem. Thus $$G(z)+\overline{H(z)}=\overline{G(z)}+H(z)$$ or what is the same
  $$G(z)-H(z)=\overline{G(z)-H(z)}.$$ Thus $G(z)-H(z)$ is a real holomorphic function and therefore it is a constant function. Further $$e^{-i\theta}g'(z)+e^{i\theta}{h'(z) z^2}=G(z)-H(z)=G(0)-H(0)=e^{-i\theta}g'(0).$$

  Hence $$G(z)+\overline{H(z)}=G(z)+\overline{G(z)}-e^{-i\theta}g'(0)=2\Re \left[e^{-i\theta}g'(z)\right]-e^{-i\theta}g'(0).$$

Assume  without losing the  generality  that $\theta=0$ and $g'(0)=1$. Then \begin{equation}\label{after}g'(z)=1-{h'(z) z^2}.\end{equation}

Further for $z=e^{it}$,  $$\partial_t F(z)=i z(1-2\Re(h'(z) z^2))$$ and  $$|\partial_t f(z)|=|g'(z)-\overline{h'(z) z^2}|=|1-2\Re(h'(z) z^2)|=1-2\Re(h'(z) z^2).$$

 From \eqref{eqp}, we infer that $$(1-2\Re(h'(z) z^2))>|h'(z)|^2(1-|z|^2).$$
In order to get the representation \eqref{convex}, by Lewy theorem, we have that the holomorphic mapping $\mu(z)=\frac{h'(z)}{g'(z)}$ maps the unit disk into itself. By \eqref{after} we deduce that $$g(z)=R\int_0^z\frac{ dt}{1+t^2\mu(t)}$$ and $$h(z)=R{\int_0^z\frac{\mu(t)dt}{1+t^2\mu(t)}}.$$

In order to prove that, every mapping $f$ defined by \eqref{convex} is a diffeomorphism we use Choquet-Kneser-Rado theorem. First of all $$\arg \partial_t F(z)= (\pi/2+t).$$ Therefore $$\partial_t \arg \partial_t F(z)=1>0$$ which means that $F(\mathbf{T})$ is a convex curve.

As $$\frac{\partial_t F(z)}{|\partial_t F(z)|}=iz,$$  if $z_1, z_2\in\mathbf{T}$ with $f(z_1)= f(z_2)$, then $$\frac{\partial_t F(z_1)}{|\partial_t F(z_1)|}=\frac{\partial_t F(z_2)}{|\partial_t F(z_2)|}$$ and so $z_1=z_2$. Thus by Choquet-Kneser-Rado theorem, $F$ is a diffeomorphism.

If $f$ is not $C^1$ up to the boundary, then we apply the approximating sequence. Let $\Omega$ be a fixed Jordan domain and assume that $\phi$ is a conformal mapping of the unit disk onto $\Omega$, with $\phi(0)=0$. For $r_n=\frac{n}{n+1}$, let $\Omega_n=\varphi(r_n\mathbf{U})$, and let $U_n=f^{-1}\Omega_n$. Let $\phi_n:\mathbf{U}\to U_n$ be a conformal mapping satisfying the condition $\phi_n(0)=0$. Then $f_n=f\circ \phi_n$ is a conformal mapping of the unit disk onto the Jordan domain $\Omega_n$. Further, by subharmonic property of $|\phi'(z)|$ we conclude that  $$R_n=|\partial \Omega_n|=\int_{\mathbf T}|\phi'(r_n z)|dz|<
\int_{\mathbf T}|\phi'( z)|dz|=|\partial\Omega|=R.$$

Then we have that
\begin{equation}\label{partibar21}|\partial f_n(z)|\le \frac{R_n}{1-|z|^2},\ \ \ z\in\mathbf{U}.\end{equation}

As $\phi_n$ converges in compacts to the identity mapping, and thus $\phi'_n$ converges in compacts to the constant $1$, we conclude that the inequality \eqref{partibar} is true for non-smooth domains.

It remains to consider the equality statement in this case. But we know that $\partial \Omega$ is rectifiable if and only if $\partial_t f\in h_1(\mathbf{U})$. (See e.g. \cite[Theorem 2.7]{kmm}). Here $h_1$ stands for the Hardy class of harmonic mappings. Now the proof is just repetition of the previous approach, and we omit the details.

\end{proof}
\begin{example}
If $\mu(z)=z^n$, then $F$ defined in \eqref{convex}, maps the unit disk to $n+2-$regular polygon of perimeter $2\pi R$ and centered at 0.
Namely we have that
$$\partial_z F(z) = \frac{R}{1+z^{n+2}},\ \ \ \partial_{\bar z} F(z)=\frac{Rz^{n}}{1+z^{n+2}}.$$ The rest follows from the similar statement obtained by Duren in \cite[p.~62]{duren}.
\end{example}

\begin{remark}
If $\mu$ is a holomorphic mapping of the unit disk onto itself and $F$ is defined by \eqref{convex}, then $F(0)=0$ and $$|DF|^2:=|F_z|^2+|F_{\bar z}|^2\ge \frac{R^2}{2} .$$
Indeed we have that $$|DF|^2=R^2\frac{1+|\mu|^2}{|1+z^2\mu|^2}\ge \frac{R^2}{2}=\frac{L^2}{8\pi^2}\ge \frac{\rho^2}{2}.$$ Here $\rho=\mathrm{dist}(0,\partial\Omega)$.
Thus we have the sharp inequality
 \begin{equation}\label{df2}|DF|^2\ge \frac{\rho^2}{2}.\end{equation}
In \cite{Dk} it is proved that we have the general  inequality \begin{equation}\label{df8}|Df|^2\ge \frac{\rho^2}{16},\end{equation} for every harmonic diffeomorphism of the unit disk onto a convex domain $\Omega$ with $f(0)=0$. Some examples suggest that the best inequality in this context is 
 \begin{equation}\label{df4}|Df|^2\ge \frac{\rho^2}{8},\end{equation}
 The last conjectured inequality is not proved. The gap between $\frac{\rho^2}{2}$ and $\frac{\rho^2}{8}$ in \eqref{df2} and \eqref{df4} appears as the mappings $F$ are special extremal mappings which for the case of $\Omega$ being the unit disk are just rotations.

\end{remark}

\end{document}